\def\R{{ \mathbb{R}}}
\newtheorem{theorem}{Theorem}[section]
\newtheorem{lemma}[theorem]{Lemma}
\theoremstyle{definition}
\newtheorem{definition}[theorem]{Definition}
\def\Sup{\displaystyle\sup}
\def\Lim{\displaystyle\lim}
\newcommand{\rrVert}{\Vert}
\newcommand{\llVert}{\Vert}
\newcommand{\rrvert}{\vert}
\newcommand{\llvert}{\vert}
\begin{document}
\begin{frontmatter}

\title{Approximation of solutions of SDEs driven by a~fractional Brownian motion, under pathwise uniqueness}

\author[a]{\inits{O.}\fnm{Oussama}\snm{El Barrimi}\corref{cor1}\fnref{t1}}\email{Oussama.elbarrimi@gmail.com}
\cortext[cor1]{Corresponding author.}
\fntext[t1]{This author is supported by the CNRST ``Centre National pour la Recherche Scientifique et Technique'', grant No. I 003/034, Rabat, Morocco.}
\author[a,b]{\inits{Y.}\fnm{Youssef}\snm{Ouknine}}\email{ouknine@uca.ac.ma}

\address[a]{Cadi Ayyad University, Faculty of Sciences Semlalia,\\
Av. My Abdellah, 2390, Marrakesh, Morocco}
\address[b]{Hassan II Academy of Sciences and Technology Rabat, Morocco}

\markboth{O. El Barrimi, Y. Ouknine}{Approximation of solutions of SDEs driven by fBm, under pathwise uniqueness}

\begin{abstract}
Our aim in this paper is to establish some strong stability properties
of a~solution of a stochastic differential equation driven by a
fractional Brownian motion for which the pathwise uniqueness holds. The
results are obtained using Skorokhod's selection theorem.
\end{abstract}

\begin{keyword}
Fractional Brownian motion\sep
Stochastic differential equations
\MSC[2010] 60G15\sep60G22
\end{keyword}

\received{29 July 2016}
\revised{12 December 2016}
\accepted{13 December 2016}
\publishedonline{20 December 2016}
\end{frontmatter}

\section{Introduction}

Consider a fractional Brownian motion (fBm), a self-similar Gaussian
process with stationary increments. It was introduced by Kolmogorov
\cite{kol} and studied by Mandelbrot and Van Ness \cite{MN}. The fBm
with Hurst parameter $H\in(0,1)$ is a centered Gaussian process with
covariance function
\[
R_H(t,s)=E\bigl(B_{t}^{H}B_{s}^{H}
\bigr)=\frac{1}{2} \bigl(t^{2H}+s^{2H}-|t-s|^{2H}
\bigr).
\]
If $H=1/2$, then the process $B^{1/2}$ is a standard Brownian motion.
When \hbox{$H\neq\frac{1}{2}$}, $B^H$ is neither a semimartingale nor
a Markov process, so that many of the techniques employed in stochastic
analysis are not available for an fBm. The self-similarity and
stationarity of increments make the fBm an appropriate model for many
applications in diverse fields from biology to finance. We refer to
\cite{Nua10} for details on these notions.

Consider the following stochastic differential equation (SDE)
\begin{align}
\label{SDE1} %
\begin{cases}
dX_t = b(t,X_t)\,dt + dB^H_t,\\
X_0=x \in\R^d,
\end{cases} %
\end{align}
where $
b:[0,T]\times\R^d \rightarrow\R^d $ is a measurable function, and
$B^H$ is a $d$-dimensional fBm with Hurst parameter $H < 1/2$ whose
components are one-dimensional independent fBms defined on a
probability space $(\varOmega, \mathcal{F}, \{\mathcal{F}_t\}_{t\in[0,T]}, P)$,
where the filtration $\{\mathcal{F}_t\}_{t\in[0,T]}$ is generated by
$B^H_t$, $t\in[0,T]$, augmented by the $P$-null sets. It has been
proved in \cite{BNP} that if $b$ satisfies the assumption
\begin{equation}
\label{ass} b\in L_{\infty}^{1,\infty}:= L^\infty
\bigl([0,T]; L^1 \bigl(\mathbb{R}^{d}\bigr)\cap
L^\infty\bigl(\R^d\bigr)\bigr),
\end{equation}
for $H < \frac{1}{2(3d-1)}$, then Eq.~(\ref{SDE1}) has a unique strong
solution, which will be assumed throughout this paper.

Notice that if the drift coefficient is Lipschitz continuous, then
Eq.~(\ref{SDE1}) has a unique strong solution, which is continuous with
respect to the initial condition. Moreover, the solution can be
constructed using various numerical schemes.

Our purpose in this paper is to establish some stability results under
the pathwise uniqueness of solutions and under weak regularity
conditions on the drift coefficient~$b$. We mention that a considerable
result in this direction has been established in \cite{BMO} when an fBm
is replaced by a standard Brownian motion.

The paper is organized as follows. In Section~2, we introduce some
properties, notation, definitions, and preliminary results. Section~3
is devoted to the study of the variation of solution with respect to
the initial data. In the last section, we drop the continuity
assumption on the drift and try to obtain the same result as in Section~3.

\section{Preliminaries}
In this section, we give some properties of an fBm, definitions, and
some tools used in the proofs.

For any $H<1/2$, let us define the square-integrable kernel
\begin{equation*}
K_{H}(t,s)=c_{H} \Biggl[ \biggl(\frac{t}{s}
\biggr)^{H-\frac{1}{2}}- \biggl(H-\frac{1}{2} \biggr)s^{\frac{1}{2}-H}\int
_{s}^{t}(u-s)^{H-\frac
{1}{2}}u^{H-\frac{3}{2}}\,du
\Biggr], \quad t>s,
\end{equation*}
where $c_{H}= [\frac{2H}{(1-2H)\beta(1-2H,H+\frac{1}{2}))}
]^{1/2}$, $t>s$.

Note that
\begin{equation*}
\frac{\partial K_{H}}{\partial t}(t,s)=c_{H} \biggl(H-\frac{1}{2} \biggr)
\biggl(\frac{t}{s} \biggr)^{H-\frac{1}{2}}(t-s)^{H-\frac{3}{2}}.
\end{equation*}
Let $B^{H}=\{B_{t}^{H}, \ t\in[0,T]\}$ be an fBm defined on $(\varOmega,
\mathcal{F}, \{\mathcal{F}_t\}_{t\in[0,T]}, P)$. We denote by $\zeta$
the set of step functions on $[0,T]$. Let $\mathcal{H}$ be the Hilbert
space defined as
the closure of $\zeta$ with respect to the scalar product
\begin{equation*}
\langle\textbf{1}_{[0,t]}, \textbf{1}_{[0,s]}\rangle_{\mathcal{H}}=R_{H}(t,s).
\end{equation*}
The mapping $\textbf{1}_{[0,t]}\rightarrow B_{t}^{H}$ can be extended
to an isometry between $\mathcal{H}$ and the Gaussian subspace of
$L^2({\varOmega})$ associated with $B^{H}$, and such an isometry is
denoted by $\varphi\rightarrow B^{H}(\varphi)$.

Now we introduce the linear operator $K_{H}^{*}$ from $\zeta$ to
$L^{2}([0,T])$ defined by
\begin{equation*}
\bigl(K_{H}^{*}\varphi\bigr) (s)=K_{H}(b,s)
\varphi(s)+\int_{s}^{b}\bigl(\varphi (t)-\varphi(s)
\bigr)\frac{\partial K_{H}}{\partial t}(t,s)\,dt.
\end{equation*}
The operator $K_{H}^{*}$ is an isometry between $\zeta$ and
$L^{2}([0,T])$, which can be extended to the Hilbert space $\mathcal
{H}$.

Define the process $W= \{W_{t},t\in[0,T]\}$ by
\begin{equation*}
W_{t}=B^{H}\bigl(\bigl(K_{H}^{\ast}
\bigr)^{-1}\textbf{1}_{[0,t]}\bigr).
\end{equation*}
Then $W$ is a Brownian motion; moreover, $B^{H}$ has the integral representation
\begin{equation*}
B_t^{H}=\int_{0}^{t}K_{H}(t,s)\,dW(s).
\end{equation*}
We need also to define an isomorphism $K_H$ from $L^2([0,T])$ onto
$I_{0+}^{H+\frac{1}{2}}(L^2)$ associated with the kernel $K_H(t,s)$ in
terms of the fractional integrals as follows:
\[
(K_H \varphi) (s) = I_{0^+}^{2H} s^{\frac{1}{2}-H}
I_{0^+}^{\frac
{1}{2}-H}s^{H-\frac{1}{2}} \varphi, \quad\varphi\in
L^2\bigl([0,T]\bigr).
\]
Note that, for $\varphi\in L^2([0,T])$, $I_{0^+}^{\alpha}$ is the left
fractional Riemann-Liouville integral operator of order $\alpha$
defined by
\[
I_{0^+}^\alpha\varphi(x) = \frac{1}{\varGamma(\alpha)} \int
_0^x (x-y)^{\alpha-1}\varphi(y)\,dy,
\]
where $\varGamma$ is the gamma function (see \cite{DU} for details).

The inverse of $K_H$ is given by
\[
\bigl(K_H^{-1} \varphi\bigr) (s) = s^{\frac{1}{2}-H}
D_{0^+}^{\frac{1}{2}-H} s^{H-\frac{1}{2}} D_{0^+}^{2H}
\varphi(s), \quad\varphi\in I_{0+}^{H+\frac{1}{2}}\bigl(L^2
\bigr),
\]
where for $\varphi\in I_{0^+}^{H+\frac{1}{2}} (L^2)$, $D_{0^+}^{\alpha
}$ is the left-sided Riemann Liouville derivative of order $\alpha$
defined by
\[
D_{0^+}^{\alpha} \varphi(x)= \frac{1}{\varGamma(1-\alpha)} \frac{d
}{d x}
\int_0^x \frac{\varphi(y)}{(x-y)^{\alpha}}\,dy.
\]
If $\varphi$ is absolutely continuous (see \cite{NO}), then
\begin{align}
\label{inverseKH} \bigl(K_H^{-1} \varphi\bigr) (s) =
s^{H-\frac{1}{2}} I_{0^+}^{\frac{1}{2}-H} s^{\frac{1}{2}-H}
\varphi'(s).
\end{align}
\begin{definition}
On a given probability space $(\varOmega, \mathcal{F}, P)$, a process $X$
is called a strong solution to \eqref{SDE1} if
\begin{itemize}
\item[\rm(1)] $X$ is $\{\mathcal{F}_t\}_{t\in[0,T]}$ adapted, where $\{
\mathcal{F}_t\}_{t\in[0,T]}$ is the filtration generated by $B^H_t,
t\!\in[0,T]$;

\item[\rm(2)] $X$ satisfies \eqref{SDE1}.
\end{itemize}
\end{definition}
\begin{definition}
A sextuple $(\varOmega, \mathcal{F}, \{\mathcal{F}_t\}_{t\in[0,T]}, P, X,
B^H )$ is called a weak solution to \eqref{SDE1} if
\begin{itemize}
\item[\rm(1)] $(\varOmega, \mathcal{F}, P)$ is a probability space
equipped with the filtration $\{\mathcal{F}_t\}_{t\in[0,T]}$ that
satisfies the usual \xch{conditions;}{conditions,}

\item[\rm(2)] $X$ is an $\{\mathcal{F}_t\}_{t\in[0,T]}$-adapted
process, and $B^H$ is an $\{\mathcal{F}_t\}_{t\in[0,T]}$-fBm;

\item[\rm(3)] $X$ and $B^H$ satisfy \eqref{SDE1}.
\end{itemize}
\end{definition}
\begin{definition}[Pathwise uniqueness]
We say that pathwise uniqueness holds for Eq.~\eqref{SDE1}
if whenever $(X,B^{H})$ and $(\widetilde{X},{B^{H}})$
are two weak solutions of Eq.~\eqref{SDE1} defined on the
same probability space $ (\varOmega,\mathcal{F},(\mathcal{F}_{t})_{t\in
[0,T]},P )$,
then $X$ and $\widetilde{X}$ are indistinguishable.
\end{definition}

The main tool used in the proofs is Skorokhod's selection theorem given
by the following lemma.
\begin{lemma} \textup{(\cite{IW}, p.~9)}
Let $(S,\rho)$ be a complete separable metric space, and let $P$,
$P_n$, $n=1,2,\ldots$, be probability measures on $(S,\mathbb{B}(S))$
such that $P_n$ converges weakly to $P$ as $n \rightarrow\infty$.
Then, on a probability space $(\widetilde{\varOmega}, \widetilde{\mathcal
{F}},\widetilde{P})$, we can construct $S$-valued random variables $X$,
$X_n$, $n=1,2,\ldots$, such that:
\begin{itemize}
\item[\rm(i)] $P_n = \widetilde{P}^{X_n}$, $n=1,2,\ldots$, and $P =
\widetilde{P}^{X}$, where $\widetilde{P}^{X_n}$ and $\widetilde{P}^{X}$
are respectively the laws of ${X_n}$ and ${X}$;

\item[\rm(ii)] $X_n$ converges to $X$ $\widetilde{P}$-a.s.
\end{itemize}
\end{lemma}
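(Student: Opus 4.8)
This is the classical Skorokhod representation theorem, so the plan is to carry out its standard proof; the essential content is that weak convergence on a Polish space can always be realized as almost sure convergence on the Lebesgue space $([0,1],\mathbb{B}([0,1]),\lambda)$, and this space will serve as $(\widetilde{\varOmega},\widetilde{\mathcal F},\widetilde P)$.

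First I would reduce everything to a countable family of ``good'' sets. Since $(S,\rho)$ is separable, for each $m\geq 1$ I cover $S$ by countably many balls of radius $<2^{-m}$; as at most countably many concentric spheres about a fixed centre can carry positive $P$-mass, the radii can be chosen so that each ball has a $P$-null boundary, and after disjointifying (and taking common refinements from one level to the next) I obtain nested countable Borel partitions $\{B^m_i\}_{i\geq 1}$ of $S$ into $P$-continuity sets of diameter $<2^{-m}$. Lumping the tail $\bigcup_{i>N_m}B^m_i$ into a single leftover piece of $P$-mass $<2^{-m}$, I may assume each level is a \emph{finite} partition. Because $P_n$ converges weakly to $P$ and each $B^m_i$ is a $P$-continuity set, the portmanteau theorem gives $P_n(B^m_i)\to P(B^m_i)$ as $n\to\infty$, for every $i$ and $m$.

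Next I build the coupling. On $([0,1],\mathbb{B}([0,1]),\lambda)$, for each fixed $n$ and for the limit $P$, I cut $[0,1]$ into consecutive subintervals $I^m_{n,i}$ of length $P_n(B^m_i)$ (respectively $I^m_i$ of length $P(B^m_i)$), arranged so that level $m+1$ refines level $m$, choose a representative $x^m_i\in B^m_i$, and set $X^{(m)}_n(\omega)=x^m_i$ when $\omega\in I^m_{n,i}$ and $X^{(m)}(\omega)=x^m_i$ when $\omega\in I^m_i$. Since the partitions refine and the diameters shrink, $\rho(X^{(m)}_n,X^{(m+1)}_n)<2^{-m}$ off the leftover piece, whose mass is summable in $m$; by Borel--Cantelli the sequences $(X^{(m)}_n)_m$ and $(X^{(m)})_m$ are $\lambda$-a.s.\ Cauchy, hence converge to random variables $X_n$ and $X$. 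Since the laws of $X^{(m)}_n$ converge weakly to $P_n$ while $X^{(m)}_n\to X_n$ a.s. (and likewise for $X$), the limits have laws $P_n$ and $P$, which is assertion (i).

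It remains to prove assertion (ii). For fixed $m$ the left endpoints of $I^m_{n,i}$ are partial sums of $P_n(B^m_j)$, which converge to the partial sums of $P(B^m_j)$, so $\lambda(I^m_{n,i}\,\triangle\,I^m_i)\to 0$ and hence $\lambda\{X^{(m)}_n\neq X^{(m)}\}\to 0$ as $n\to\infty$. Combining this with the uniform bound $\rho(X^{(m)}_n,X_n)\vee\rho(X^{(m)},X)\leq 2^{1-m}$ valid off a set of measure $<2^{-m}$, I pick $n_1<n_2<\cdots$ so that $\lambda\{\rho(X_n,X)>2^{2-m}\}<2^{-m}$ for all $n\geq n_m$, and a last Borel--Cantelli argument yields $X_n\to X$ $\lambda$-a.s. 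The only genuinely delicate point is this final diagonalization: one must interleave the refinement limit $m\to\infty$ with the weak-convergence limit $n\to\infty$ and keep the leftover/truncation sets from spoiling almost sure convergence, which is precisely why the $B^m_i$ are chosen to be $P$-continuity sets with summably small leftover mass.
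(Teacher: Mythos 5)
First, a point of reference: the paper gives no proof of this lemma at all --- it is quoted from Ikeda and Watanabe \cite{IW} as a ready-made tool --- so your attempt can only be compared with the standard proof of Skorokhod's representation theorem. Your construction (nested countable partitions of $S$ into $P$-continuity sets of shrinking diameter, matching subintervals of $[0,1]$ of lengths $P_n(B^m_i)$ and $P(B^m_i)$, representative points, passage to the limit in $m$) is exactly the classical one, and the identification of the laws in part (i) is fine.

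The genuine gap is in the last step, the proof of (ii). What your estimates actually establish is that for every $m$ there is $n_m$ with $\lambda\{\rho(X_n,X)>2^{2-m}\}<2^{-m}$ for all $n\ge n_m$; this is precisely convergence in probability, and it does not imply almost sure convergence. Borel--Cantelli is not applicable in the form you invoke it: for each $m$ there are infinitely many indices $n$ with $n_m\le n<n_{m+1}$, each contributing an exceptional set of mass up to $2^{-m}$, so the relevant series over $n$ need not converge (the ``typewriter'' sequence on $[0,1]$ satisfies exactly such bounds and fails to converge a.s.). The correct conclusion is pointwise, and your intermediate step already contains the needed fact: for every $\omega$ outside the countable set of endpoints of the intervals $I^m_i$, and for each fixed $m$, $\omega$ lies in the interior of some $I^m_i$; since the endpoints of $I^m_{n,i}$ converge to those of $I^m_i$, one gets $X^{(m)}_n(\omega)=X^{(m)}(\omega)$ for all $n$ large (depending on $\omega$ and $m$), and combined with $\rho(X^{(m)}_n,X_n)\vee\rho(X^{(m)},X)\le 2^{1-m}$ this yields $\limsup_n\rho(X_n(\omega),X(\omega))\le 2^{2-m}$ for every $m$, hence a.s.\ convergence with no appeal to Borel--Cantelli. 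A secondary flaw: the bound $\rho(X^{(m)}_n,X_n)\le 2^{1-m}$ ``off a set of measure $<2^{-m}$'' requires the $\lambda$-mass of the leftover pieces in the $n$-th coupling, namely $\sum_{k\ge m}P_n(\mathrm{leftover}_k)$, to be small \emph{uniformly in} $n$, whereas you only control $P(\mathrm{leftover}_k)$; this is most easily repaired by not truncating to finite partitions at all, since termwise convergence $P_n(B^m_i)\to P(B^m_i)$ with both families summing to $1$ already gives uniform convergence of the partial sums by Scheff\'e's lemma, after which the exceptional set reduces to the countable set of interval endpoints.
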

We will also make use of the following result, which gives a criterion
for the tightness of sequences of laws associated with continuous processes.
\begin{lemma} \textup{(\cite{IW}, p.\ 18)} \label{tightness}
Let $\{X_{t}^{n}$, $t\in[0,T]\}$, $n=1,2,\ldots$, be a sequence of
$d$-dimensional continuous processes satisfying the following two conditions:
\begin{itemize}
\item[\rm(i)] There exist positive constants $M$ and $\gamma$ such that
$E[|X^n(0)|^{\gamma}]\leq M$ for every $n=1,2,\ldots$;
\item[\rm(ii)] there exist positive constants $\alpha$, $\beta$, $M_k$,
$k=1,2,\ldots$, such that, for every $n\geq1$ and all $t, s \in[0,k]$,
$k=1,2,\ldots$,
\[
E\bigl[\bigl|X_{t}^{n}-X_{s}^{n}\bigr|^{\alpha}
\bigr]\leq M_k |t-s|^{1+\beta}.
\]
\end{itemize}
Then, there exist a subsequence $(n_k)$, a probability space
$(\widetilde{\varOmega}, \widetilde{\mathcal{F}},\widetilde{P})$, and
$d$-dimen\-sional continuous processes $\widetilde{X}$, $\widetilde
{X}^{n_k}$, $k=1,2,\ldots$, defined on $\widetilde{\varOmega}$ such that
\begin{itemize}
\item[\rm(1)] The laws of $\widetilde{X}^{n_k}$ and $X^{n_k}$ coincide;
\item[\rm(2)] $\widetilde{X}^{n_k}_{t}$ converges to $\widetilde
{X}_{t}$ uniformly on every finite time interval $\widetilde{P}$-a.s.
\end{itemize}
\end{lemma}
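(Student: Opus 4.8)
The plan is to combine a Kolmogorov‑type tightness criterion with Prokhorov's theorem and the Skorokhod selection theorem stated above. First I would regard each $X^{n}$ as a random element of the complete separable metric space $S = C([0,\infty);\R^{d})$ endowed with the metric of uniform convergence on compact intervals, and denote by $P_{n}$ the law of $X^{n}$ on $(S,\mathbb{B}(S))$. The whole point is to prove that the family $\{P_{n}\}_{n\geq1}$ is tight.

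The analytic heart of the argument is to upgrade the pointwise moment bound (ii) to a uniform‑in‑$n$ control of the modulus of continuity. Fixing $k$ and a Hölder exponent $\theta\in(0,\beta/\alpha)$, a dyadic chaining argument (equivalently, the Garsia--Rodemich--Rumsey inequality) turns (ii) into an estimate of the form
\[
E\Biggl[\Biggl(\sup_{\substack{s,t\in[0,k]\\ 0<t-s\leq h}}\frac{|X^{n}_{t}-X^{n}_{s}|}{|t-s|^{\theta}}\Biggr)^{\alpha}\Biggr]\leq C_{k},
\]
with $C_{k}$ depending only on $\alpha$, $\beta$, $M_{k}$ and $k$, not on $n$. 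Together with (i), which by Chebyshev's inequality gives a uniform bound on the distribution of $|X^{n}(0)|$, this lets me construct for each $k$ and each $\varepsilon>0$ a set $K^{(k)}_{\varepsilon}\subset S$ — characterised through Arzelà--Ascoli by a common bound at $0$ and a common Hölder‑type modulus of continuity on $[0,k]$ — whose complement has $P_{n}$‑measure at most $\varepsilon 2^{-k}$ for all $n$. Taking $K_{\varepsilon}=\bigcap_{k\geq1}\overline{K^{(k)}_{\varepsilon}}$ yields a compact subset of $S$ with $P_{n}(K_{\varepsilon})\geq1-\varepsilon$ for every $n$, i.e. tightness.

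By Prokhorov's theorem, tightness of $\{P_{n}\}$ gives a subsequence $(n_{k})$ and a probability measure $P$ on $(S,\mathbb{B}(S))$ with $P_{n_{k}}\Rightarrow P$. Since $S$ is Polish, I then apply the Skorokhod selection theorem (the first Lemma of the excerpt) to obtain a probability space $(\widetilde{\varOmega},\widetilde{\mathcal{F}},\widetilde{P})$ and $S$‑valued random variables $\widetilde{X}^{n_{k}}$, $\widetilde{X}$ such that $\widetilde{P}^{\widetilde{X}^{n_{k}}}=P_{n_{k}}$, $\widetilde{P}^{\widetilde{X}}=P$, and $\widetilde{X}^{n_{k}}\to\widetilde{X}$ $\widetilde{P}$‑a.s. in $S$. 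Reading these elements of $S$ as continuous processes via $\widetilde{X}^{n_{k}}_{t}(\omega)=\bigl(\widetilde{X}^{n_{k}}(\omega)\bigr)(t)$ and $\widetilde{X}_{t}(\omega)=\bigl(\widetilde{X}(\omega)\bigr)(t)$, convergence in $S$ is precisely uniform convergence on every finite time interval, so conclusions (1) (equality of laws) and (2) (a.s. uniform convergence on compacts) follow at once.

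The main obstacle is the second step: deriving the uniform modulus‑of‑continuity estimate from (ii). One must check carefully that the constants produced by the chaining/GRR argument depend only on $\alpha$, $\beta$ and the $M_{k}$ and are genuinely independent of $n$, and one must assemble the compact sets for all the intervals $[0,k]$ simultaneously by the diagonal device above; the remaining steps (Prokhorov, Skorokhod, and the identification of the limit as a continuous process) are then routine.
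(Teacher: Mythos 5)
The paper offers no proof of this lemma: it is quoted directly from Ikeda and Watanabe, so there is nothing in the text to compare your argument against. Your sketch is the standard (and correct) proof of that cited result --- a Kolmogorov--Chentsov chaining bound giving a uniform-in-$n$ H\"older modulus, Arzel\`a--Ascoli compact sets yielding tightness in $C([0,\infty);\R^d)$, Prokhorov's theorem for a weakly convergent subsequence, and then the Skorokhod selection lemma already stated in the paper --- which is essentially the route taken in the cited source.
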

\section{Variation of solutions with respect to initial conditions}
The purpose of this section is to ensure the continuous dependence of
the solution with respect to the initial condition when the drift $b$
is continuous and bounded. Note that, in the case of ordinary
differential equation, the continuity of the coefficient is sufficient
to ensure this dependence.

Next, we give a theorem that will be essential in establishing the
desired result.
\begin{theorem}\label{main}
Let $b$ be a continuous bounded function. Then, under the pathwise
uniqueness for SDE \eqref{SDE1}, we have
\[
\lim_{x\rightarrow x_0} E \Bigl[\sup_{0\leq t\leq T}\bigl\llvert
X_t (x)-X_t (x_0)\bigr\rrvert
^{2} \Bigr]=0.
\]
\end{theorem}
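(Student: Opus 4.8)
The natural strategy is a compactness argument via Skorokhod's selection theorem. Suppose, for contradiction, that the conclusion fails: then there exist $\varepsilon > 0$ and a sequence $x_n \to x_0$ such that $E[\sup_{0\le t\le T} |X_t(x_n) - X_t(x_0)|^2] \ge \varepsilon$ for all $n$. Write $X^n := X(x_n)$ and $X^0 := X(x_0)$ for the corresponding strong solutions of \eqref{SDE1}, all driven by the same fBm $B^H$ on the original space. The plan is to show that the laws of the triples (or rather the quadruples) $(X^n, X^0, B^H, W)$ on the path space $C([0,T];\R^d)^3 \times C([0,T];\R^d)$ are tight, extract a weakly convergent subsequence, realize the limit on a new probability space via Skorokhod's theorem with almost sure uniform convergence, check that the limiting objects are again weak solutions of \eqref{SDE1} with the \emph{same} initial condition $x_0$, invoke pathwise uniqueness to conclude the two limiting solution components coincide, and finally transfer this back to get a contradiction with the uniform lower bound $\varepsilon$.

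First I would establish tightness using Lemma~\ref{tightness}. Condition (i) is immediate since the initial values are deterministic and converge. For condition (ii), I would use the representation $X^n_t = x_n + \int_0^t b(s,X^n_s)\,ds + B^H_t$ together with boundedness of $b$: the drift part is Lipschitz in $t$ uniformly in $n$ (constant $\|b\|_\infty$), and for the fBm increments one has $E[|B^H_t - B^H_s|^{\alpha}] = C_\alpha |t-s|^{\alpha H}$ for all even $\alpha$, so choosing $\alpha$ large enough that $\alpha H > 1$ gives the required Kolmogorov-type bound $E[|X^n_t - X^n_s|^\alpha] \le M|t-s|^{1+\beta}$. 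The same bound holds for $X^0$, for $B^H$, and one should also include the driving Brownian motion $W$ from the representation $B^H_t = \int_0^t K_H(t,s)\,dW_s$ so that the martingale structure is preserved under the passage to the limit. Hence the joint laws are tight, and Skorokhod's theorem (Lemma following the tightness lemma, or its statement directly) yields a subsequence, a new space $(\widetilde\varOmega, \widetilde{\mathcal F}, \widetilde P)$, and processes $\widetilde X^n, \widetilde X^0, \widetilde B^{H,n}, \widetilde W^n$ converging $\widetilde P$-a.s.\ uniformly on $[0,T]$ to limits $\widetilde X^\infty, \widetilde X^{0,\infty}, \widetilde B^H, \widetilde W$, with matching finite-dimensional laws at each $n$.

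The crux — and the step I expect to be the main obstacle — is identifying the limit as a genuine weak solution and, in particular, showing that \emph{both} limiting solution processes $\widetilde X^\infty$ and $\widetilde X^{0,\infty}$ solve \eqref{SDE1} relative to the \emph{same} limiting fBm $\widetilde B^H$ with the \emph{same} initial value $x_0$. Passing to the limit in the equation requires care: the drift term $\int_0^t b(s,\widetilde X^n_s)\,ds \to \int_0^t b(s,\widetilde X^\infty_s)\,ds$ follows from a.s.\ uniform convergence, continuity of $b$, and dominated convergence (here continuity and boundedness of $b$ are used essentially); one must also verify via the kernel representation and the inverse operator \eqref{inverseKH} that $\widetilde B^H$ is indeed an fBm adapted to the (augmented) filtration generated by the limiting processes and that $\widetilde W$ is the associated Brownian motion, so that the pair $(\widetilde X^{0,\infty}, \widetilde B^H)$ is an honest weak solution with initial condition $x_0$ — the same as for $(\widetilde X^\infty, \widetilde B^H)$, since $x_n \to x_0$. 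Once both are weak solutions on the common space $(\widetilde\varOmega, \widetilde{\mathcal F}, (\widetilde{\mathcal F}_t), \widetilde P)$ driven by the same fBm with the same start, pathwise uniqueness forces $\widetilde X^\infty \equiv \widetilde X^{0,\infty}$ up to indistinguishability. Consequently $\widetilde P$-a.s.\ $\sup_{0\le t\le T}|\widetilde X^n_t - \widetilde X^0_t| \to 0$; uniform integrability of $\sup_t |\widetilde X^n_t - \widetilde X^0_t|^2$ (from the uniform moment bounds obtained in the tightness step, which give $L^p$ bounds for some $p>2$) upgrades this to convergence in $L^2$, so $E[\sup_t|\widetilde X^n_t - \widetilde X^0_t|^2]\to 0$. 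Since the law of $(\widetilde X^n,\widetilde X^0)$ equals that of $(X^n,X^0)$, this contradicts $E[\sup_t|X^n_t - X^0_t|^2]\ge\varepsilon$, completing the proof.
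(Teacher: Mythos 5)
Your proposal is correct and follows essentially the same route as the paper: argue by contradiction, establish tightness of the triple of processes via a Kolmogorov-type moment bound using the boundedness of $b$ and the fBm increment moments, apply Skorokhod's selection theorem to get almost sure uniform convergence on a new space, pass to the limit in the equation using the continuity of $b$, invoke pathwise uniqueness to identify the two limits, and conclude by uniform integrability. The only cosmetic difference is that you carry the underlying Brownian motion $W$ along in the tuple and worry explicitly about the filtration of the limiting fBm, which the paper does not need since the noise is additive and the law of $\widetilde{B}^{H}$ is already that of an fBm.
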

Before we proceed to the proof of Theorem~\ref{main}, we state the
following technical lemma.
\begin{lemma} \label{tight}
Let $X^n$ be the solution of \eqref{SDE1} corresponding to the initial
condition $x_n$. Then, for every $p>\frac{1}{2H}$, there exists a
positive constant $C_p$ such that, for all $s, t \in[0,T]$,
\[
E\bigl[\bigl|X^{n}_t-X^{n}_s\bigr|^{2p}
\bigr] \leq C_p |t-s|^{2pH}.
\]
\end{lemma}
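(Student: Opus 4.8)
The plan is to bound the increment $X^n_t - X^n_s$ using the integral equation \eqref{SDE1}. Writing
\[
X^n_t - X^n_s = \int_s^t b(u, X^n_u)\, du + \bigl(B^H_t - B^H_s\bigr),
\]
I would split the estimate into the drift term and the fractional Brownian motion term. For the drift, since $b$ is bounded (this is the standing hypothesis of Section~3; under the weaker assumption~\eqref{ass} one instead uses the $L^1\cap L^\infty$ bound together with a density estimate, but here boundedness suffices), we have
\[
\Bigl| \int_s^t b(u, X^n_u)\, du \Bigr| \leq \|b\|_\infty\, |t-s|,
\]
so that $E\bigl[ \bigl| \int_s^t b(u, X^n_u)\,du \bigr|^{2p} \bigr] \leq \|b\|_\infty^{2p} |t-s|^{2p}$. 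Since $t,s \in [0,T]$ and $2pH < 2p$ (because $H < 1/2 < 1$), we can absorb this into a constant times $|t-s|^{2pH}$, using $|t-s|^{2p} = |t-s|^{2pH} \cdot |t-s|^{2p(1-H)} \leq T^{2p(1-H)} |t-s|^{2pH}$.

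For the fractional Brownian motion term, the key point is that each component of $B^H$ is a centered Gaussian process, and by the explicit form of the covariance $R_H$, one has $E\bigl[ \bigl| B^H_t - B^H_s \bigr|^2 \bigr] = d\,|t-s|^{2H}$. Since $B^H_t - B^H_s$ is Gaussian, all its moments are controlled: there is a constant $c_p$ depending only on $p$ (and $d$) with
\[
E\bigl[ \bigl| B^H_t - B^H_s \bigr|^{2p} \bigr] = c_p\, \bigl( E\bigl[ \bigl| B^H_t - B^H_s \bigr|^2 \bigr] \bigr)^p = c_p\, d^p\, |t-s|^{2pH}.
\]
Combining the two bounds via the elementary inequality $|a+b|^{2p} \leq 2^{2p-1}(|a|^{2p} + |b|^{2p})$ yields
\[
E\bigl[ \bigl| X^n_t - X^n_s \bigr|^{2p} \bigr] \leq 2^{2p-1}\bigl( \|b\|_\infty^{2p} T^{2p(1-H)} + c_p d^p \bigr) |t-s|^{2pH},
\]
which is the claimed estimate with $C_p := 2^{2p-1}\bigl( \|b\|_\infty^{2p} T^{2p(1-H)} + c_p d^p \bigr)$; note $C_p$ does not depend on $n$ since $\|b\|_\infty$ and $T$ are fixed.

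I do not expect any serious obstacle here; the lemma is a routine Kolmogorov-type increment estimate. The only point requiring a little care is the moment bound on the Gaussian increment of $B^H$ — one should cite or recall the standard fact that for a centered Gaussian random vector $G$, $E[|G|^{2p}]$ is a fixed constant multiple of $(E[|G|^2])^p$ — and the bookkeeping that turns $|t-s|^{2p}$ into $|t-s|^{2pH}$ at the cost of a factor $T^{2p(1-H)}$, which is harmless on the finite interval $[0,T]$. The role of the hypothesis $p > \frac{1}{2H}$ is not needed for this lemma itself but rather downstream: it guarantees $2pH > 1$, so that Lemma~\ref{tightness}(ii) applies with $\alpha = 2p$ and $\beta = 2pH - 1 > 0$, which is presumably why the lemma is phrased this way.
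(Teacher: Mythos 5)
Your proof is correct and follows essentially the same route as the paper: decompose the increment into the drift integral and the fBm increment, use boundedness of $b$ for the former and stationarity/self-similarity (Gaussian moment scaling) for the latter, then absorb $|t-s|^{2p}$ into $|t-s|^{2pH}$ on $[0,T]$. Your added remark correctly identifies that the hypothesis $p>\frac{1}{2H}$ is only needed downstream to make $2pH>1$ for the tightness criterion.
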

\begin{proof}
Fix $s<t$ in $[0,T]$. We have
\begin{align*}
\bigl|X^{n}_t-X^{n}_s\bigr|^{2p} &\leq C_p \Biggl[\Biggl\llvert \int_s^t b\bigl(u,X^{n}_u\bigr)\,du\Biggr\rrvert ^{2p} + \bigl|B^H_t-B^H_s\bigr|^{2p}\Biggr].
\end{align*}
Due to the stationarity of the increments and the scaling property
of an fBm and the boundedness of $b$, we get that
\begin{align*}
E\bigl|X^{n}_t-X^{n}_s\bigr|^{2p} &\leq C_p \bigl[ |t-s|^{2p}+|t-s|^{2pH} \bigr]\\
&\leq C_p|t-s|^{2pH},
\end{align*}
which finishes the proof.
\end{proof}
Let us now turn to the proof of Theorem~\ref{main}.
\begin{proof}
Suppose that the result of the theorem is false. Then there exist a
constant $\delta>0$ and a sequence $x_n$ converging to $x_0$ such that
\[
\inf_n E \Bigl[\sup_{0\leq t\leq T}\bigl\llvert
X_t (x_n)-X_t (x_0)\bigr\rrvert
^{2} \Bigr]\geq\delta.
\]
Let $X^n$ (respectively, $X$) be the solution of (\ref{SDE1})
corresponding to the initial condition $x_n$ (respectively, $x_0$).
According to Lemma~\ref{tight}, the sequence $(X^{n},X,B^H)$ satisfies
conditions (i) and (ii) of Lemma~\ref{tightness}. Then, by Skorokhod's
selection theorem there exist a subsequence $\{n_k,k\geq1 \}$, a
probability space $(\widetilde{\varOmega}, \widetilde{\mathcal
{F}},\widetilde{P})$, and stochastic processes $(\widetilde{X},
\widetilde{Y}, \widetilde{B}^H)$, $(\widetilde{X^{k}}, \widetilde
{Y^{k}},\widetilde{B}^{H,k})$, $k\geq1 $, defined on $(\widetilde
{\varOmega}, \widetilde{\mathcal{F}},\widetilde{P})$ such that:
\begin{itemize}
\item [$(\alpha)$] for each $k\geq1$, the laws of $(\widetilde
{X^{k}}, \widetilde{Y^{k}},\widetilde{B}^{H,k})$ and $({X^{n_k}},
X,B^H)$ coincide;
\item[$(\beta)$] $(\widetilde{X}^{k}, \widetilde{Y}^{k}, \widetilde
{B}^{H,k})$ converges to
$(\widetilde{X}, \widetilde{Y}, \widetilde{B}^H)$ uniformly on every
finite time interval $\widetilde{P}$-a.s.
\end{itemize}
Thanks to property $(\alpha)$, we have, for $k\geq1$ and $t>0$,
\[
E\Biggl\llvert \widetilde{X}^{k}_{t} -x_k -
\int_0^t b\bigl(s,\widetilde
{X}^{k}_{s}\bigr)\,ds - \widetilde{B}^{H,k}_t
\Biggr\rrvert ^{2}=0.
\]
In other words, $\widetilde{X}^{k}_{t}$ satisfies the following SDE:
\[
\widetilde{X}^{k}_t= x_k + \int
_0^t b\bigl(s,\widetilde{X}^{k}_{s}
\bigr)\,ds + \widetilde{B}^{H,k}_t .
\]
Similarly,
\[
\widetilde{Y}^{k}_t= x_0 + \int
_0^t b\bigl(s,\widetilde{Y}^{k}_{s}
\bigr)\,ds + \widetilde{B}^{H,k}_t .
\]
Using $(\beta)$, we deduce that
\[
\lim_{k \rightarrow\infty}\int_0^t b\bigl(s,
\widetilde{X}^{k}_{s}\bigr)\,ds=\int_0^t
b(s,\widetilde{X}_{s})\,ds
\]
and
\[
\lim_{k \rightarrow\infty}\int_0^t b\bigl(s,
\widetilde{Y}^{k}_{s}\bigr)\,ds=\int_0^t
b(s,\widetilde{Y}_{s})\,ds
\]
in probability and uniformly in $t \in[0, T]$.

Thus, the processes $\widetilde{X}$ and $\widetilde{Y}$ satisfy the same
SDE on $(\widetilde{\varOmega}, \widetilde{\mathcal{F}},\widetilde{P})$
with the same driving noise $\widetilde{B}^{H}_t$ and the initial
condition $x_0$. Then, by pathwise uniqueness, we conclude that
$\widetilde{X}_{t}=\widetilde{Y}_{t}$ for all $t \in[0, T]$,
$\widetilde{P}$-a.s.

On the other hand, by uniform integrability we have that
\begin{eqnarray*}
\delta&\leq& \liminf_n E \Bigl[\max_{0\leq t\leq T}
\bigl\llvert X_t (x_n)-X_t (x_0)
\bigr\rrvert ^{2} \Bigr]
\\
&=&\liminf_k \widetilde{E} \Bigl[\max_{0\leq t\leq T}
\bigl\llvert \widetilde {X}^{k}_t-\widetilde{Y}^{k}_t
\bigr\rrvert ^{2} \Bigr]
\\
&\leq& \widetilde{E} \Bigl[\max_{0\leq t\leq T}\llvert \widetilde
{X}_t-\widetilde{Y}_t\rrvert ^{2} \Bigr],
\end{eqnarray*}
which is a contradiction. Then the desired result follows.
\end{proof}
\section{The case of discontinuous drift coefficient}
In this section, we drop the continuity assumption on the drift
coefficient and only assume that $b$ is bounded. The goal of this
section is to generate the same result as in Theorem~\ref{main} without
the continuity assumption.

Next, in order to use the fractional Girsanov theorem given in $\mbox
{\cite[Thm.~2]{NO},}$ we should first check that the conditions imposed
in the latter are
satisfied in our context. This will be done in the following lemma.
\begin{lemma}\label{gir}
Suppose that $X$ is a solution of SDE \eqref{SDE1}, and let $b$ be a
bounded function. Then the process
$v= K_H^{-1} ( \int_0^{\cdot} b(r, X_r) \,dr )$ enjoys the
following properties:
\begin{itemize}
\item[$(1)$] $v_s \in L^2([0,T]), \ P\text{-a.s.}$;
\item[$(2)$] $E [ \exp \{\frac{1}{2} \int_0^T |v_s|^2 \,ds \}  ] < \infty$.
\end{itemize}
\end{lemma}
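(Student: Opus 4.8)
The plan is to reduce both assertions to a single \emph{deterministic} bound on $\int_0^T|v_s|^2\,ds$, exploiting the boundedness of $b$ together with the explicit form of $K_H^{-1}$ on absolutely continuous paths recorded in~\eqref{inverseKH}. First I would note that, since $b$ is bounded, the path $t\mapsto\varphi_t:=\int_0^t b(r,X_r)\,dr$ is Lipschitz, hence absolutely continuous, with $\varphi_0=0$ and $\varphi'_s=b(s,X_s)$ for a.e.\ $s$; thus $\varphi$ lies in the domain of $K_H^{-1}$ and \eqref{inverseKH} applies, so that $P$-almost surely
\[
v_s=s^{H-\frac{1}{2}}\,I_{0^+}^{\frac{1}{2}-H}\bigl(r^{\frac{1}{2}-H}b(r,X_r)\bigr)(s)=\frac{s^{H-\frac{1}{2}}}{\varGamma(\frac{1}{2}-H)}\int_0^s (s-r)^{-\frac{1}{2}-H}r^{\frac{1}{2}-H}b(r,X_r)\,dr.
\]

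Next I would estimate $|v_s|$ pointwise. With $M:=\|b\|_\infty$ and the change of variables $r=s\theta$ in the inner integral,
\[
|v_s|\le\frac{M\,s^{H-\frac{1}{2}}}{\varGamma(\frac{1}{2}-H)}\int_0^s (s-r)^{-\frac{1}{2}-H}r^{\frac{1}{2}-H}\,dr=\frac{M\,\beta(\frac{1}{2}-H,\frac{3}{2}-H)}{\varGamma(\frac{1}{2}-H)}\,s^{\frac{1}{2}-H}=:C_M\,s^{\frac{1}{2}-H},
\]
since $\int_0^s (s-r)^{-\frac{1}{2}-H}r^{\frac{1}{2}-H}\,dr=s^{1-2H}\beta(\frac{1}{2}-H,\frac{3}{2}-H)$ and $s^{H-\frac{1}{2}}\cdot s^{1-2H}=s^{\frac{1}{2}-H}$; the Beta value is finite because $H<\frac{1}{2}$.

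Finally, since $H<\frac{1}{2}$ forces $1-2H>-1$, the bound $|v_s|^2\le C_M^2\,s^{1-2H}$ integrates to the deterministic estimate $\int_0^T|v_s|^2\,ds\le C_M^2\,T^{2-2H}/(2-2H)<\infty$. This yields $(1)$ at once --- indeed $v\in L^2([0,T])$ surely --- and, after exponentiating,
\[
E\Bigl[\exp\Bigl\{\frac{1}{2}\int_0^T|v_s|^2\,ds\Bigr\}\Bigr]\le\exp\Bigl\{\frac{C_M^2\,T^{2-2H}}{2(2-2H)}\Bigr\}<\infty,
\]
which is $(2)$. I do not expect a genuine obstacle here: the boundedness of $b$ turns the whole statement into a deterministic computation, so the only points requiring care are the verification that $\varphi$ lies in the domain where \eqref{inverseKH} is valid and the bookkeeping of the exponents in the fractional integral, the latter producing the Beta function above.
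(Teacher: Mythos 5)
Your proposal is correct and follows essentially the same route as the paper: apply \eqref{inverseKH} to the absolutely continuous path $t\mapsto\int_0^t b(r,X_r)\,dr$, bound the resulting fractional integral by the Beta integral $\int_0^s(s-r)^{-\frac{1}{2}-H}r^{\frac{1}{2}-H}\,dr=s^{1-2H}\beta(\frac{1}{2}-H,\frac{3}{2}-H)$ (your constant $\beta(\frac{1}{2}-H,\frac{3}{2}-H)/\varGamma(\frac{1}{2}-H)$ equals the paper's $\varGamma(\frac{3}{2}-H)/\varGamma(2-2H)$), and conclude that $\int_0^T|v_s|^2\,ds$ admits a deterministic bound, which gives both $(1)$ and $(2)$. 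Your version is marginally more careful (you justify that \eqref{inverseKH} applies and keep the $s^{\frac{1}{2}-H}$ dependence when integrating, yielding a slightly sharper constant), but there is no substantive difference.
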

\begin{proof}
(1) In light of \eqref{inverseKH}, we can write
\begin{align*}
|v_s| &= \bigl|s^{H-\frac{1}{2}} I_{0^+}^{\frac{1}{2}-H} s^{\frac{1}{2}-H} \bigl|b(s,X_s)\bigr| \bigr|\\
&= \frac{1}{\varGamma (\frac{1}{2}-H )} s^{H- \frac{1}{2}} \int_0^s(s-r)^{-\frac{1}{2}-H} r^{\frac{1}{2}-H} \bigl|b(r,X_r)\bigr|\,dr\\
&\leq \, \|b\|_\infty\frac{1}{\varGamma (\frac{1}{2}-H )} s^{H- \frac{1}{2}} \int_0^s (s-r)^{-\frac{1}{2}-H} r^{\frac{1}{2}-H}\,dr\\
&= \, \|b\|_\infty\frac{\varGamma (\frac{3}{2}-H )}{\varGamma(2-2H  )}s^{\frac{1}{2}-H}\\
&\leq\, \|b\|_\infty\frac{\varGamma (\frac{3}{2}-H )}{\varGamma(2-2H  )}T^{\frac{1}{2}-H},
\end{align*}
where $\|\cdot\|_{\infty}$ denotes the norm in $L^{\infty}([0,T];
L^\infty(\R^d))$.

As a result, we get that
\begin{align*}
\int_{0}^{T} |v_{s}|^2 \,ds
<\infty, \quad P\text{-a.s.}
\end{align*}
(2) The second item is obtained easily by the following estimate:
\begin{align*}
E& \Biggl[ \exp \Biggl\{\frac{1}{2} \int_0^T
\llvert v_s\rrvert ^2 \,ds \Biggr\} \Biggr] \leq\exp \biggl
\{ \frac{1}{2} C_H T^{2(1-H)} \|b\|_\infty
^2 \biggr\},
\end{align*}
where $C_H=\frac{\varGamma (\frac{3}{2}-H )^2}{\varGamma (2-2H
 )^2}$,
which finishes the proof.
\end{proof}
Next, we will establish the following Krylov-type inequality that will
play an essential role in the sequel.
\begin{lemma}
\label{krylov1}
Suppose that $X$ is a solution of SDE \eqref{SDE1}. Then, there exists
$\beta>1+dH$ such that, for
any measurable nonnegative function $g:[0,T]\times\mathbb R^d \mapsto
\mathbb R^d
_+$, we have
\begin{eqnarray}
\label{krylovI} E\int_{0}^{T}g(t,X_{t})
\,dt \leq M \Biggl(\int_{0}^{T}\int
_{\R^d} g^{\beta}(t,x)\,dx\,dt \Biggr)^{1/\beta},
\end{eqnarray}
where $M$ is a constant depending only on $T$, $d$, $\beta$, and $H$.
\end{lemma}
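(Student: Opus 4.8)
The plan is to derive the Krylov-type bound by reducing the estimate for the solution $X$ to an estimate for the driving fractional Brownian motion $B^H$, using the Girsanov change of measure justified in Lemma~\ref{gir}. First I would invoke Lemma~\ref{gir}: since $b$ is bounded, the Novikov-type condition $(2)$ holds, so by the fractional Girsanov theorem of \cite[Thm.~2]{NO} there is a probability measure $Q$, equivalent to $P$, under which $X_t = x + B^{H,Q}_t$ for some $Q$-fractional Brownian motion $B^{H,Q}$. Denoting the density by $\frac{dP}{dQ} = \mathcal{E}$, I would write
\[
E\int_0^T g(t,X_t)\,dt = E_Q\Biggl[\mathcal{E}\int_0^T g(t, x+B^{H,Q}_t)\,dt\Biggr],
\]
and then apply Hölder's inequality in the form $E_Q[\mathcal E\,\xi]\le (E_Q[\mathcal E^{q}])^{1/q}(E_Q[\xi^{q'}])^{1/q'}$. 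The first factor is finite by the exponential bound in Lemma~\ref{gir}(2) (after checking the corresponding Novikov condition with the constant doubled, which is again finite because $b$ is bounded), so it remains to bound $E_Q[(\int_0^T g(t,x+B^{H,Q}_t)\,dt)^{q'}]$.

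Next, the core of the argument: I would prove the Krylov inequality for the fractional Brownian motion itself, i.e. that for a suitable exponent $\beta>1+dH$,
\[
\Bigl(E\Bigl[\Bigl(\int_0^T h(t,B^H_t)\,dt\Bigr)^{m}\Bigr]\Bigr)^{1/m} \le C\Biggl(\int_0^T\int_{\R^d} h^{\beta}(t,x)\,dx\,dt\Biggr)^{1/\beta}
\]
for nonnegative $h$, where $m=q'$. The mechanism here is that $B^H_t$ has a Gaussian density: for each fixed $t>0$ the law of $B^H_t$ has density $p_t(x) = (2\pi t^{2H})^{-d/2}\exp(-|x|^2/(2t^{2H}))$. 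So $E[h(t,B^H_t)] = \int_{\R^d} h(t,x)p_t(x)\,dx$, and by Hölder in the $x$-variable this is at most $\|h(t,\cdot)\|_{L^\beta(\R^d)}\,\|p_t\|_{L^{\beta'}(\R^d)}$, and $\|p_t\|_{L^{\beta'}} = c_{\beta',d}\,t^{-dH(1-1/\beta')} = c\, t^{-dH/\beta}$. The integrability in $t$ near $0$ of $t^{-dH/\beta}$ requires $dH/\beta<1$, i.e. $\beta>dH$; to get the strict inequality $\beta>1+dH$ claimed, one handles the $m$-th moment, where the joint density of $(B^H_{t_1},\dots,B^H_{t_m})$ along the simplex $t_1<\cdots<t_m$ must be used together with the conditional-increment structure; integrating the product of conditional densities over the ordered simplex and applying Hölder once more produces the product of singular factors $\prod (t_{i}-t_{i-1})^{-dH/\beta}$, whose integrability over the simplex forces $\beta>1+dH$ and yields the constant $M$ depending on $T,d,\beta,H$. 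Combining this with the Girsanov reduction above gives \eqref{krylovI} with a possibly different exponent and constant, after relabeling.

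The main obstacle I anticipate is controlling the $m$-th moment of $\int_0^T h(t,B^H_t)\,dt$: unlike the Brownian case $H=1/2$, the fBm increments are not independent, so the joint density along the simplex is not simply a product of transition kernels. One needs either the local nondeterminism property of fBm (which gives a lower bound on conditional variances $\mathrm{Var}(B^H_{t_i}\mid B^H_{t_1},\dots,B^H_{t_{i-1}}) \ge c(t_i-t_{i-1})^{2H}$, hence an upper bound on conditional densities of the right order) or the representation $B^H_t=\int_0^t K_H(t,s)\,dW_s$ to estimate the Gaussian conditional densities. Once that upper bound on the ordered joint density is in hand, the rest is an application of the generalized Hölder inequality (splitting the $m$ integrals) and an elementary Beta-function computation over the simplex; the condition $\beta>1+dH$ emerges precisely as the threshold making $\int_{\{t_1<\cdots<t_m\}}\prod_{i}(t_i-t_{i-1})^{-dH/\beta}\,dt<\infty$.
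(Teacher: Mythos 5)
Your first step (the Girsanov reduction via Lemma~\ref{gir}, under which $X-x$ becomes an fBm under the new measure, followed by H\"older to separate the density) is exactly the paper's opening move. The divergence, and the genuine gap, occurs at the second H\"older application. You apply H\"older to the pair $(\mathcal{E},\,\xi)$ with $\xi=\int_0^T g(t,x+B^{H,Q}_t)\,dt$ taken as a single random variable, which forces you to control $E_Q[(\int_0^T h(t,B^H_t)\,dt)^{m}]$ for $m=q'>1$. That is the hard content of your proposal, and you do not prove it: you correctly identify that it requires the joint law of $(B^H_{t_1},\dots,B^H_{t_m})$ on the simplex and some form of local nondeterminism to bound the conditional densities, but you leave this as an ``anticipated obstacle'' with only a sketch. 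As written, the key estimate is assumed rather than established, so the proof is incomplete. (A secondary issue: your claim that $\int_{\{t_1<\cdots<t_m\}}\prod_i(t_i-t_{i-1})^{-dH/\beta}\,dt<\infty$ forces $\beta>1+dH$ is not right -- that integral converges as soon as $dH/\beta<1$, i.e.\ $\beta>dH$ -- so even the advertised source of the threshold does not check out.)

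The gap is avoidable, and the paper's route shows how: interchange the time integral and the expectation \emph{before} applying H\"older. Writing $\widehat E[Z_T\int_0^T g(t,X_t)\,dt]=\int_0^T\widehat E[Z_T\,g(t,X_t)]\,dt$ and applying H\"older pointwise in $t$ (and then once more in $t$ to pull the power inside), one only needs the \emph{first} moment $\widehat E\int_0^T g^{\rho}(t,X_t)\,dt$, which is computed exactly from the one-time Gaussian marginal $(2\pi t^{2H})^{-d/2}e^{-\|y-x\|^2/2t^{2H}}$ -- no joint densities, no local nondeterminism, no simplex integrals. A further H\"older in $(t,y)$ with exponent $\gamma$ produces the factor $t^{(1-\gamma')dH}$, whose integrability near $t=0$ is precisely what imposes $\gamma>1+dH$ and hence $\beta=\rho\gamma>1+dH$; the factor $\widehat E[Z_T^\alpha]$ is bounded by citing \cite[Lemma~4.3]{BNP}. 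So the correct threshold comes from the single-time density raised to the conjugate power, not from an $m$-fold simplex computation. If you replace your moment estimate with this Fubini-first version of H\"older, your argument closes.
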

\begin{proof}
Let $W$ be a $d$-dimensional Brownian motion such that
\begin{equation*}
B^{H}_t=\int_0^t
K_H(t,s)\,dW_s.
\end{equation*}
For the process $v$ introduced in Lemma~\ref{gir}, let us define
$\widehat{P}$ by
\begin{eqnarray*}
\label{ZT} \frac{d\widehat{P} }{dP}=\exp \Biggl\{ -\int_{0}^{T}v_{t}
\,dW_{t} - \frac{1}{2}\int_{0}^{T}v_{t}^{2}
\,dt \Biggr\}:=Z_T^{-1}.
\end{eqnarray*}
Then, in light of Lemma~\ref{gir} together with the fractional Girsanov
theorem $\mbox{\cite[Thm.~2]{NO}}$, we can conclude that $\widehat{P}$
is a probability measure under which the process $X-x$ is an fBm.

Now, applying H\"{o}lder's inequality, we have
\begin{align}
E\int_{0}^{T}g(t,X_{t})\,dt&= \widehat E \Biggl\{Z_T\int_0^Tg(t,X_{t})\,dt\Biggr\}\nonumber\\[-3pt]
&\leq C \bigl\{\widehat{E} \bigl[Z_T^\alpha \bigr] \bigr\}^{1/\alpha} \Biggl\{\widehat{E}\int_{0}^{T}g^{\rho}(t,X_{t})\,dt \Biggr\}^{1/\rho},\label{ineq}
\end{align}
where $1/\alpha+1/\rho=1$, and $C$ is a positive constant depending
only on $T$, $\alpha$, and $\rho$.

From $\mbox{\cite[Lemma 4.3]{BNP}}$ we can see that $\widehat
E[Z_T^\alpha]$ satisfies the following property:
\begin{equation}
\label{ineq-2} \widehat E\bigl[Z_T^\alpha\bigr] \leq
C_{H,d,T}\bigl(\|b\|_\infty\bigr) < \infty,
\end{equation}
where $C_{H,d,T}$ is a continuous increasing function depending only on
$H$, $d$, and~$T$.

On the other hand, applying again H\"{o}lder's inequality with $1/\gamma
+1/\gamma^{\prime}=1$
and $\gamma>dH+1$, we obtain
\begin{align}
\label{previous-inequality}
\widehat{E}\int_{0}^{T}g^{\rho}(t,X_{t})\,dt&=\int_{0}^{T} \int_{\R^d}g^{\rho}(t,y)\bigl(2\pi t^{2H}\bigr)^{-d/2}\exp^{-\|y-x\|^{2}/2t^{2H}}\,dy\,dt\nonumber\\[-3pt]
&\leq \Biggl(\int_{0}^{T}\int_{\R^d} \bigl(2\pi t^{2H}\bigr)^{-d\gamma^{\prime}/2}\exp^{-\gamma^{\prime}\|y-x\|^{2}/2 t^{2H}}\,dy\,dt \Biggr)^{1/\gamma^{\prime}}\nonumber\\[-3pt]
&\quad \times \Biggl(\int_{0}^{T}\int_{\R^d} g^{\rho\gamma}(t,y)\,dy\,dt \Biggr)^{1/\gamma}.
\end{align}
A direct calculation gives
\begin{eqnarray*}
\int_{\R^d}\bigl(2\pi t^{2H}\bigr)^{-d\gamma^{\prime} /2}
\exp^{-\gamma^{\prime}\|
y-x\|^{2}/2t^{2H}}\,dy =(2\pi)^{d/2-d\gamma^{\prime} /2} \bigl(\gamma^{\prime}
\bigr)^{-d/2}t^{(1-\gamma^{\prime})\,dH}.
\end{eqnarray*}
Plugging this into \eqref{previous-inequality}, we get
\begin{align*}
\widehat{E}\int_{0}^{T}g^{\rho}(t,X_{t})\,dt&\leq \Biggl(\int_{0}^{T}(2\pi)^{d/2-d\gamma^{\prime} /2}\bigl(\gamma ^{\prime} \bigr)^{-d/2}t^{(1-\gamma^{\prime})\,dH}\,dt\Biggr)^{1/\gamma^{\prime}}\\[-3pt]
&\quad \times \Biggl(\int_{0}^{T}\int_{\R^d} g^{\rho\gamma}(t,y)\,dy\,dt \Biggr)^{1/\gamma}\\[-3pt]
&\leq \bigl((2\pi)^{d/2-d\gamma^{\prime} /2} \bigl(\gamma^{\prime}\bigr)^{-d/2} \bigr)^{1/\gamma^{\prime}} \Biggl(\int_{0}^{T}t^{(1-\gamma^{\prime})\,dH}\,dt\Biggr)^{1/\gamma^{\prime}}\\
&\quad \times \Biggl(\int_{0}^{T}\int_{\R^d} g^{\rho\gamma}(t,y)\,dy\,dt \Biggr)^{1/\gamma}\\
&\leq C\bigl(\gamma^{\prime},T,d,H\bigr) \Biggl(\int_{0}^{T}\int_{\R^d} g^{\rho\gamma}(t,y)\,dy\,dt \Biggr)^{1/\gamma}.
\end{align*}
Finally, combining this with \eqref{ineq} and \eqref{ineq-2}, we get
estimate \eqref{krylovI} with $\beta=\rho\gamma$. The proof is now complete.
\end{proof}
Now we are able to state the main result of this section.
\begin{theorem}
If the pathwise uniqueness holds for Eq.~\eqref{SDE1}, then without the
continuity assumption on the drift coefficient, the conclusion of
Theorem~\ref{main} remains valid.
\end{theorem}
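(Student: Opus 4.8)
The plan is to run the proof of Theorem~\ref{main} essentially verbatim, and to replace the one step that used continuity of $b$ (the convergence $\int_0^t b(s,\widetilde X^k_s)\,ds\to\int_0^t b(s,\widetilde X_s)\,ds$) by an argument based on the Krylov-type estimate of Lemma~\ref{krylov1}. So I would argue by contradiction: assume there are $\delta>0$ and a sequence $x_n\to x_0$ with $\inf_n E[\sup_{0\le t\le T}|X_t(x_n)-X_t(x_0)|^2]\ge\delta$, and let $X^n$ and $X$ be the solutions of \eqref{SDE1} started at $x_n$ and $x_0$. Lemma~\ref{tight} uses only the boundedness of $b$, so $(X^n,X,B^H)$ still satisfies the hypotheses of Lemma~\ref{tightness}; Skorokhod's selection theorem then gives a subsequence $(n_k)$, a probability space $(\widetilde\varOmega,\widetilde{\mathcal F},\widetilde P)$, and processes $(\widetilde X^k,\widetilde Y^k,\widetilde B^{H,k})\to(\widetilde X,\widetilde Y,\widetilde B^H)$, uniformly on $[0,T]$ and $\widetilde P$-a.s., whose laws match those of $(X^{n_k},X,B^H)$. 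In particular $\widetilde B^{H,k}$, and hence its uniform limit $\widetilde B^H$, is an fBm, and, exactly as in Section~3,
\[
\widetilde X^k_t=x_{n_k}+\int_0^t b\bigl(s,\widetilde X^k_s\bigr)\,ds+\widetilde B^{H,k}_t,\qquad \widetilde Y^k_t=x_0+\int_0^t b\bigl(s,\widetilde Y^k_s\bigr)\,ds+\widetilde B^{H,k}_t .
\]

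The only real difficulty is the limit passage in the drift term. First I would record that Lemma~\ref{krylov1} applies to each $\widetilde X^k$ with a constant $M$ that does not depend on $k$ (it depends only on $T$, $d$, $\beta$, $H$ and $\|b\|_\infty$), because $\widetilde X^k$ has the same law as $X^{n_k}$, a solution of \eqref{SDE1} with the bounded drift $b$; by Fatou's lemma, the uniform a.s.\ convergence $\widetilde X^k\to\widetilde X$, and a routine approximation of measurable $g$ by continuous functions, the same inequality then holds for $\widetilde X$. Now fix $\beta>1+dH$ as in Lemma~\ref{krylov1} and choose continuous bounded $b_m$ with $\|b_m\|_\infty\le\|b\|_\infty$ and $\|b_m-b\|_{L^\beta([0,T]\times\R^d)}\to0$. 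Writing
\[
\int_0^t b\bigl(s,\widetilde X^k_s\bigr)\,ds-\int_0^t b(s,\widetilde X_s)\,ds=\int_0^t(b-b_m)\bigl(s,\widetilde X^k_s\bigr)\,ds+\int_0^t\bigl(b_m(s,\widetilde X^k_s)-b_m(s,\widetilde X_s)\bigr)\,ds+\int_0^t(b_m-b)(s,\widetilde X_s)\,ds,
\]
the first and third terms are bounded in $L^1(\widetilde P)$, uniformly in $k$ and $t\in[0,T]$, by $M\|b-b_m\|_{L^\beta}$ via Lemma~\ref{krylov1} applied to $\widetilde X^k$ and $\widetilde X$ respectively, while the middle term tends to $0$ as $k\to\infty$ for each fixed $m$, by continuity and boundedness of $b_m$, the uniform a.s.\ convergence $\widetilde X^k\to\widetilde X$, and bounded convergence. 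Letting $k\to\infty$ and then $m\to\infty$ gives $\int_0^t b(s,\widetilde X^k_s)\,ds\to\int_0^t b(s,\widetilde X_s)\,ds$ in probability, uniformly in $t\in[0,T]$; since also $\int_0^t b(s,\widetilde X^k_s)\,ds=\widetilde X^k_t-x_{n_k}-\widetilde B^{H,k}_t\to\widetilde X_t-x_0-\widetilde B^H_t$, we get $\widetilde X_t=x_0+\int_0^t b(s,\widetilde X_s)\,ds+\widetilde B^H_t$, and in the same way $\widetilde Y_t=x_0+\int_0^t b(s,\widetilde Y_s)\,ds+\widetilde B^H_t$.

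Hence $\widetilde X$ and $\widetilde Y$ solve \eqref{SDE1} on $(\widetilde\varOmega,\widetilde{\mathcal F},\widetilde P)$ with the same driving fBm $\widetilde B^H$ and the same initial condition $x_0$, so pathwise uniqueness gives $\widetilde X_t=\widetilde Y_t$ for all $t\in[0,T]$, $\widetilde P$-a.s. On the other hand, Lemma~\ref{tight} with any $2p>2$ makes $\{\sup_{0\le t\le T}|X_t(x_n)-X_t(x_0)|^2\}_n$ uniformly integrable, so, using the equality of laws and the uniform a.s.\ convergence $\widetilde X^k\to\widetilde X$, $\widetilde Y^k\to\widetilde Y$,
\[
\delta\le\liminf_n E\Bigl[\sup_{0\le t\le T}\bigl|X_t(x_n)-X_t(x_0)\bigr|^2\Bigr]=\liminf_k\widetilde E\Bigl[\sup_{0\le t\le T}\bigl|\widetilde X^k_t-\widetilde Y^k_t\bigr|^2\Bigr]\le\widetilde E\Bigl[\sup_{0\le t\le T}\bigl|\widetilde X_t-\widetilde Y_t\bigr|^2\Bigr]=0,
\]
which is the desired contradiction. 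I expect the main obstacle to be precisely this limit passage in the drift: it rests on the uniformity in $k$ of the constant in Lemma~\ref{krylov1} and on transferring that inequality to the limit process $\widetilde X$, after which the $L^\beta$-approximation of $b$ by continuous functions closes the gap; the remaining ingredients (tightness via Lemma~\ref{tight}, Skorokhod, uniform integrability, pathwise uniqueness) are used exactly as in Section~3.
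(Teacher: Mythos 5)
Your proof is correct and follows the paper's strategy in all essentials: contradiction, tightness via Lemma~\ref{tight}, Skorokhod's selection theorem, the Krylov-type estimate of Lemma~\ref{krylov1} to pass to the limit in the drift term via approximation of $b$ by continuous functions, then pathwise uniqueness and uniform integrability. The one place where you genuinely diverge is in how the approximation is organized. The paper mollifies, $b^{\delta}=\delta^{-d}\phi(\cdot/\delta)\ast b$, and since mollification of a merely bounded $b$ converges in $L^{\beta}$ only \emph{locally}, it inserts a spatial cutoff $\theta(\widetilde{X}^{k}_{s}/R)$, bounds the truncated part by $N\|b^{\delta}-b\|_{\beta,R}$ through Lemma~\ref{krylov1}, and controls the tail by $\sup_{k}P[\sup_{s\le t}|\widetilde{X}^{k}_{s}|>R]\to 0$. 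You instead posit a single global approximation: continuous bounded $b_m$ with $\|b_m\|_\infty\le\|b\|_\infty$ and $\|b_m-b\|_{L^{\beta}([0,T]\times\R^d)}\to 0$, which lets you skip the truncation entirely and get a cleaner three-term estimate. This is legitimate, but it is the one step that deserves justification: for a bounded measurable $b$ that is not integrable at infinity, mollification does \emph{not} supply such a sequence (the errors near the discontinuities need not be summable over $\R^d$), so you must invoke something like Lusin's theorem on $\R^d$ plus Tietze extension, or else truncate in space first and diagonalize --- at which point you have essentially reproduced the paper's localization. Your treatment of the other delicate point, transferring Lemma~\ref{krylov1} to the limit process $\widetilde{X}$ by Fatou and approximation of $g$, is actually more explicit than the paper's, which simply asserts that the tail estimate ``also holds for $\widetilde X$'' and reuses the same arguments. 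In short: same skeleton, with a trade-off between your global $L^{\beta}$ approximation (cleaner bookkeeping, but an existence claim to verify) and the paper's mollification-plus-cutoff (more terms, but every ingredient is elementary).
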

\begin{proof}
The proof is similar to that of Theorem~\ref{main}. The only difficulty
is to show that
\[
\lim_{k \rightarrow\infty}\int_0^t b\bigl(s,
\widetilde{X}^{k}_{s}\bigr)\,ds=\int_0^t
b(s,\widetilde{X}_{s})\,ds
\]
in probability. In other words,
for $\epsilon>0$, we will show that
\begin{eqnarray}
\label{desired-result} \limsup_{k \rightarrow\infty} P \Biggl[\Biggl\llvert \int
_0^t \bigl(b\bigl(s,\widetilde{X}^{k}_{s}
\bigr)- b(s,\widetilde{X}_{s}) \bigr) \,ds \Biggr\rrvert > \epsilon
\Biggr]=0.
\end{eqnarray}
Let us first define
\[
b^{\delta} (t,x)= \delta^{-d} \phi(x/\delta) \ast b(t,x) ,
\]
where $\ast$ denotes the convolution on $\R^d$, and $\phi$ is an
infinitely differentiable function with support in the unit ball such
that $\int\phi(x)\,dx = 1$.

Applying Chebyshev's inequality, we obtain
\begin{align*}
&P \Biggl[\Biggl\llvert \int_0^t\bigl(b\bigl(s,\widetilde{X}^{k}_{s}\bigr)- b(s,\widetilde{X}_{s}) \bigr) \,ds\Biggr\rrvert > \epsilon \Biggr]\nonumber\\
&\quad \leq \frac{1}{\epsilon^2} E \Biggl[\int_0^t\bigl| b\bigl(s,\widetilde{X}^{k}_{s}\bigr)- b(s,\widetilde{X}_{s})\bigr|^{2} \,ds \Biggr]\nonumber\\
&\quad \leq \frac{4}{\epsilon^2} \Biggl\{ E \Biggl[\int_0^t\bigl|b\bigl(s,\widetilde {X}^{k}_{s}\bigr)- b^{\delta}\bigl(s,\widetilde{X}^{k}_{s}\bigr)\bigr|^{2} \,ds\Biggr]\nonumber\\
&\qquad + E \Biggl[\int_0^t\bigl|b^{\delta} \bigl(s,\widetilde{X}^{k}_{s}\bigr)-b^{\delta} (s,\widetilde{X}_{s})\bigr|^{2} \,ds \Biggr]\nonumber\\
&\qquad + E \Biggl[\int_0^t\bigl|b^{\delta} (s,\widetilde{X}_{s})- b(s,\widetilde{X}_{s})\bigr|^{2}\,ds \Biggr] \Biggr\}\nonumber\\
&\quad = \frac{4}{\epsilon^2}(J_1+J_2+J_3).
\end{align*}
From the continuity of $b^{\delta}$ in $x$ and from the convergence of
$\widetilde{X}^{k}_{s}$ to $\widetilde{X}_{s}$ uniformly on every
finite time interval $\widetilde{P} $ a.s.\ it follows that $J_2$
converges to 0 as $k \rightarrow\infty$ for every $\delta>0$.\vadjust{\eject}

On the other hand, let $\theta: \mathbb R^d\rightarrow\mathbb R_+ $
be a smooth truncation function
such that $\theta(z)=1$ in the unit ball and $\theta(z)=0$ for
$|z|>1$.

By applying Lemma~\ref{krylov1} we obtain
\begin{align}
\label{in2} J_1 &= E\int_{0}^{t}\theta\bigl(\widetilde{X}^{k}_{s}/R\bigr) \bigl| b^{\delta}\bigl(s,\widetilde {X}^{k}_{s}\bigr) - b\bigl(s,\widetilde{X}^{k}_{s}\bigr) \bigr|^2 \,ds\nonumber\\
&\quad + E\int_{0}^{t} \bigl(1 - \theta\bigl(\widetilde{X}^{k}_{s}/R\bigr) \bigr) \bigl|b^{\delta}\bigl(s,\widetilde{X}^{k}_{s}\bigr)-b\bigl(s,\widetilde{X}^{k}_{s}\bigr) \bigr|^2 \,ds\nonumber\\
&\leq N \bigl\llVert b^{\delta}- b \bigr\rrVert _{\beta,R}+2CE\int_{0}^{t} \bigl(1-\theta\bigl(\widetilde{X}^{k}_{s}/R\bigr) \bigr)\,ds,
\end{align}
where $N$ does not depend on $\delta$ and $k$, and $\|\cdot\|_{\beta
,R}$ denotes the norm in
$L^{\beta}([0,T]\times B(0,R))$.

The last expression in the right-hand side of the last inequality
satisfies the following estimate:
\begin{equation}
\label{in2'} E\int_{0}^{t} \bigl(1 - \theta\bigl(\widetilde{X}^{k}_{s}/R\bigr) \bigr)\,ds \leq
\Sup_{k\geq1} P \Bigl[\Sup_{s\leq t}\bigl|\widetilde{X}^{k}_{s}\bigr|>R\Bigr].
\end{equation}
But we know that $\sup_{k\geq1} E [\sup_{s\leq t}|\widetilde
{X}^{k}_{s}|^{p} ] < \infty$ for all $p>1$, and thus
\begin{equation}
\label{sup} \Lim_{R \rightarrow\infty}\Sup_{k\geq1} P
\Bigl[\Sup_{s\leq t}\bigl|\widetilde{X}^{k}_{s}\bigr|>R \Bigr] = 0.
\end{equation}
Substituting estimate \eqref{in2'} into \eqref{in2}, letting $\delta
\rightarrow0$, and using \eqref{sup}, we deduce that the convergence
of the term $J_1$ follows.

Finally, since estimate \eqref{in2'} also holds for $\widetilde X$, it
suffices to use the same arguments as before to obtain the convergence
of the term $J_3$, which completes the proof.
\end{proof}
\section*{Acknowledgements} We thank the reviewer for his thorough
review and highly appreciate the comments and
suggestions, which significantly contributed to improving the quality
of the paper.

%

\end{document}